	\theoremstyle{definition}
	\newtheorem{thm}{Theorem}
	\newtheorem{prop}[thm]{Proposition} 
	\newtheorem{cor}[thm]{Corollary}
	\newtheorem{rmk}[thm]{Remark}
\numberwithin{equation}{section}
\newcommand{\ignore}[1]{}
\newcommand{\idf}{{\tt IdentityFinder}}
\begin{document}
\thispagestyle{empty}
\title{{\idf} and some new identities of Rogers-Ramanujan type}
\author{Shashank Kanade and Matthew C. Russell
\footnote{Department of Mathematics, Rutgers, The State University of New Jersey, Piscataway, NJ
08854. Emails: {\tt \{skanade, russell2\} [at] math [dot] rutgers [dot] edu}}}
\date{}
\maketitle

\begin{abstract}
The Rogers-Ramanujan identities and various analogous identities
(Gordon, Andrews-Bressoud, Capparelli, etc.) form a family of very 
deep identities concerned with integer partitions. 
These identities (written in generating function form) are typically 
of the form ``product side'' equals ``sum side,'' with the 
product side enumerating partitions obeying certain congruence 
conditions and the sum side obeying certain initial conditions 
and difference conditions (along with possibly other restrictions). 
We use symbolic computation to generate various such sum sides 
and then use Euler's algorithm to see which of them actually 
do produce elegant conjectured product sides. 
We not only rediscover many of the known identities 
but also discover some apparently new ones, as conjectures
supported by strong mathematical evidence.
\end{abstract}

\section{Introduction}

The aim of this paper is to serve as an introduction to the Maple package
\idf, a package written to assist in the discovery of new partition identities.
It is freely available for download at:

\url{http://math.rutgers.edu/~russell2/papers/partitions14.html}.

Let us recall the celebrated pair of Rogers-Ramanujan identities
(cf.\ Chapter 7 of \cite{A1}). Let $n$ be a positive integer.

\begin{enumerate}
\item The number of partitions of $n$ into
parts congruent to $\pm 1$ modulo 5 is the same
as the number of partitions of $n$ such that
adjacent parts have difference at least 2.

\item The number of partitions of $n$ into
parts congruent to $\pm 2$ modulo 5 is the same
as the number of partitions of $n$ such that
adjacent parts have difference at least 2
and such that 1 does not appear as a part.
\end{enumerate}

These identities have a fascinating history (see for instance, \cite{A1}) 
and have stimulated major research programs in the past century.
In generating function form, the Rogers-Ramanujan identities
can be written as follows:

\begin{align}
\prod_{j\ge 0}
\frac 1 {\left(1-q^{5j+1}\right)\left(1-q^{5j+4}\right)} 
&= \sum_{n\ge 0}d_1(n)q^n,\\
\prod_{j\ge 0}
\frac 1 {\left(1-q^{5j+2}\right)\left(1-q^{5j+3}\right)} 
&= \sum_{n\ge 0}d_2(n)q^n,
\end{align}
where $d_i(n)$ is the number of partitions of $n$ such that
adjacent parts have difference at least 2 and such that the smallest
allowed part is $i$.

In this paper we focus on identities that have a similar shape, namely,
the ones in which a sum side related to restricted partitions clicks
into being an interesting infinite product.

Typically, the products in such identities correspond to congruence condititions
(analogues of the mod 5 conditions in the Rogers-Ramanujan identities)
and the sum sides are variations on the difference-at-a-distance
theme. 
Some notable identities that are analogous to 
the Rogers-Ramanujan identities and have this shape are: 
Euler's identities, Gordon's identities, the
Andrews-Bressoud identities, Capparelli's identities,
the little G\"{o}llnitz identities, etc.
Recently, in \cite{N}, partition identities involving
highly complicated, yet very beautiful, sum sides have been conjectured.

The aim of this paper is to
build a systematic mechanism based on symbolic computation 
that aids in the discovery of more such partition identities.
This mechanism is embodied in our Maple package \idf.
We present six new conjectured identities which were found using  \idf.

Let us elaborate on the inner workings of \idf. Our main idea is to 
generate a zoo of sum sides and then use Euler's algorithm to 
``factorize'' the sum sides into infinite products.  We then glean 
the results to find out which sum sides give ``periodic'' factorizations. 
These pairs of sum sides and their corresponding periodic factorizations 
are the worthwhile partition identities. As mentioned before, the sum sides
correspond to a variety of restrictions on the partitions. To name a few:
difference-at-a-distance condition, congruence-at-a-distance condition,
smallest part(s) condition, etc. We therefore write procedures corresponding to
each such condition, and then sift partitions which satisfy a combination of
such condititions with varying parameters.  This highly ``modular'' approach
makes it quite easy to enlarge the ``search space''  by incorporating a 
plethora of such checks, and advances in partition identities could be quickly
assimilated into our package. We are now working towards enlarging the 
search space to include multi-color partition identities
and to include highly complex sum sides such as those in \cite{N}. 

We are in a process of providing proofs of these identities.
There is no doubt in our minds that our new identities are true. 
Our original conjectures were based on calculations up to approximately $n=30$; however,
once potential cadidates were found, we were able to check the identities up to 
partitions of $n=500$. We achieve this by using the polynomial recursions 
arising from the sum sides of these identities.
Our methods of verification are discussed in detail in Section~\ref{sec:500}.

The idea of using computer explorations to discover and prove 
partition identities has been around for a while. 
In fact, methods similar to ours have been previously
used effectively by Andrews in \cite{A2}.
Later, Andrews also made creative use of SCRATCHPAD to this end. 
We refer the reader to Chapter 10 of \cite{A3}.
Computer searches have been used extensively in finding
finite Rogers-Ramanujan type identities (see~\cite{S} and~\cite{MSZ}).
In~\cite{MSZ}, the question of automatically discovering analytic identities was considered.
There is now a vast literature in which computer methods have been 
used in relation to partition identities.
See the introduction of~\cite{MSZ} for an extensive history of computer searches
related to Rogers-Ramanujan type identities.

Partition identities of this shape are fundamentally related to various
areas of mathematics and this is one of the reasons why these 
identities are extremely deep and important. We choose a few such areas to
make our point.

Lepowsky-Wilson's seminal work \cite{LW1}-\cite{LW4}
showed how these identities arise in what came to be understood as a 
vertex-operator-algebraic framework.
They showed how the Euler, Rogers-Ramanujan, Gordon, and Andrews-Bressoud
identities are related to the structure of standard modules for the affine
Lie algebra $A_1^{(1)}$, by means of vertex-operator-theoretic structures
that they called $Z$-algebras. In fact, they provided a  
vertex-operator-theoretic proof of the Euler and the Rogers-Ramanujan identities.
Carrying this program forward, Capparelli discovered new identities using certain 
standard modules for the affine Lie algebra $A_2^{(2)}$. 
See \cite{MP1}--\cite{MP3} for further ideas and discoveries
in this spirit.
Preliminary investigations reveal that
three out of our six new identities are related to the
principally specialized characters of level 3 
standard modules for the affine Lie algebra $D_4^{(3)}$. 
For the remaining three identities, the congruence conditions in the
product sides are ``asymmetric,'' i.e., they don't fall into
plus/minus pairs of congruence classes.
Therefore, these identities are not directly related to principally
specialized characters for affine Lie algebra modules.

Baxter's work~\cite{B} showed how the hard hexagon model from statistical mechanics
leads to the Rogers-Ramanujan identities. Significant advances in 
this area have been made by several authors, including 
Berkovich, Forrester, McCoy, Schilling, Warnaar, and others. 
See the references in~\cite{S}.
Recently, in \cite{GOW}, Griffin-Ono-Warnaar have found a framework incorporating
the Hall-Littlewood polynomials that extends 
the Rogers-Ramanujan identities to doubly infinite families of $q$-series identities
and illuminates certain arithmetic properties of such identities.

This paper is organized as follows:
Section~\ref{sec:prelim} contains mathematical preliminaries
necessary for our work.
Section~\ref{sec:methods} outlines our methods, including descriptions of
the main conditions used in \idf.
Section~\ref{sec:results} contains our main results: six new conjectures.
Section~\ref{sec:500} discusses further ways to computationally verify our conjectures.
Finally, Section~\ref{sec:future} provides ideas for extending our work.

\section{Preliminaries}
\label{sec:prelim}

A partition of $n$ is a list of integers
$\left(\lambda_1,\lambda_2,\dots,\lambda_m\right)$
such that $\lambda_1+\cdots+\lambda_m=n$
and $\lambda_1 \ge \lambda_2 \ge \cdots \ge \lambda_m \ge 1$.
In most of what follows, $n=0$ will be assumed to have
exactly one partition, namely, the null partition.

In this paper, a ``product side'' is a generating function of the form
$\prod_{j\ge1} \left(1-q^j\right)^{p_j}$.
Typically, each $p_j$ will be either 0 or $-1$ and this will correspond to
the generating function for partitions where the allowable parts are those 
$j$ such that $p_j = -1$. In addition, if the $p_j$s form a periodic sequence, 
then this product can be interpreted as the generating function for partitions 
whose parts satisfy certain congruence conditions modulo the period.

The sum side, on the other hand, deals with certain difference conditions
between  parts, along with some initial conditions and (possibly) other conditions.
For example, the sum side of Euler's ``odd-equals-distinct'' identity is that all parts must be distinct; this
means that $\lambda_i > \lambda_{i+1}$ for all $i$.

We recall Euler's algorithm, which is one of the crucial ingredients used in \idf:

\begin{prop} (Cf.\ Theorem 10.3 of \cite{A3}.)
\label{prop:euler}
Let $f(q)$ be a formal power series such that
\begin{equation}
f(q) = 1 + \sum\limits_{n\ge 1} b_nq^n.
\end{equation}
Then
\begin{equation}
f(q) = \prod\limits_{m\ge 1}(1-q^m)^{-a_m},
\end{equation}
where the $a_m$s are defined recursively by:
\begin{equation}\label{eqn:prod-recurrence}
nb_n = na_n + \sum\limits_{d|n,\, d<n}da_d + \sum\limits_{j=1}^{n-1}
\left(\sum\limits_{d|j} da_d\right)b_{n-j}.
\end{equation}
\end{prop}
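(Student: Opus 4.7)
The plan is to take the formal logarithmic derivative of the infinite product and match coefficients; then the recurrence falls out and simultaneously shows that the sequence $(a_m)$ is uniquely determined. I would first observe that the product $\prod_{m \ge 1}(1-q^m)^{-a_m}$ makes sense as a formal power series regardless of the signs of the $a_m$, since for each $n$ only the factors with $m \le n$ affect the coefficient of $q^n$. Likewise the formal logarithm and its derivative can be manipulated termwise.

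Next I would compute $q f'(q)/f(q)$ in two ways. Writing $\log f(q) = \sum_{m \ge 1} a_m \sum_{k \ge 1} q^{mk}/k$ and differentiating gives
\begin{equation*}
\frac{q f'(q)}{f(q)} \;=\; \sum_{m \ge 1} a_m \sum_{k \ge 1} m q^{mk} \;=\; \sum_{n \ge 1} c_n q^n, \qquad \text{where } c_n := \sum_{d \mid n} d\, a_d.
\end{equation*}
On the other hand, from $f(q) = 1 + \sum_{n \ge 1} b_n q^n$, one has $q f'(q) = \sum_{n \ge 1} n b_n q^n$. Clearing the denominator, the identity $q f'(q) = f(q) \cdot \sum_{n \ge 1} c_n q^n$ gives, after comparing coefficients of $q^n$,
\begin{equation*}
n b_n \;=\; c_n + \sum_{j=1}^{n-1} c_j\, b_{n-j} \;=\; n a_n + \sum_{\substack{d \mid n \\ d < n}} d\, a_d + \sum_{j=1}^{n-1} \left( \sum_{d \mid j} d\, a_d \right) b_{n-j},
\end{equation*}
which is exactly the recurrence \eqref{eqn:prod-recurrence}.

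The logic of the proof is then the following: the computation above shows that \emph{if} $f(q) = \prod_m (1-q^m)^{-a_m}$, then $(a_m)$ must satisfy the stated recurrence. Conversely, since the coefficient of $a_n$ on the right is $n \ne 0$, the recurrence uniquely determines each $a_n$ from $b_1, \dots, b_n$ and $a_1, \dots, a_{n-1}$, so one may \emph{define} $(a_m)$ by it and then reverse the computation to conclude that the product $\prod_m (1-q^m)^{-a_m}$ has the same logarithmic derivative as $f$ and the same constant term $1$, hence equals $f$.

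I do not anticipate any serious obstacle. The only subtlety is justifying the formal manipulations of the infinite product, logarithm, and derivative — specifically, the interchange of sums implicit in passing from $\sum_m a_m \sum_k q^{mk}/k$ to $\sum_n c_n q^n / \text{(stuff)}$. This is handled by noting that in each degree only finitely many $m$ contribute, so all rearrangements are legitimate in $\mathbb{Q}[[q]]$.
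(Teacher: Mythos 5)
Your proof is correct and follows exactly the approach the paper indicates: the paper defers to Theorem 10.3 of Andrews' \emph{$q$-Series} monograph and notes that "the basic idea is to use logarithmic differentiation," which is precisely the computation you carry out in full. Your additional remarks on the unique solvability of the recurrence for $a_n$ and the legitimacy of the formal rearrangements are sound and simply fill in details the paper leaves to the cited reference.
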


\begin{proof}
See the proof of Theorem 10.3 of \cite{A3}. 
The basic idea is to use logarithmic differentiation.
\end{proof}

\begin{rmk}\label{rmk:garvan}
The algorithm for ``factoring'' a sum side into an infinite product indicated by equation 
\eqref{eqn:prod-recurrence} is a part of Frank Garvan's {\tt qseries} package.
\end{rmk}

\begin{cor}\label{cor:finiteisok}
Let $f(q)$ and $g(q)$ be formal power series with constant term 1
such that $$f(q)-g(q)\in q^{k+1}\mathbb{C}[[q]]$$ for some $k \ge 1$.
If 
\begin{equation}
f(q) = \prod\limits_{m\ge 1}(1-q^m)^{-a^{(f)}_m}
\end{equation}
and
\begin{equation}
g(q) = \prod\limits_{m\ge 1}(1-q^m)^{-a^{(g)}_m}
\end{equation}
then $a^{(f)}_m = a^{(g)}_m$ for all $m=1,\dots,k$.
\end{cor}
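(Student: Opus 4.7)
The plan is to read off from the recurrence \eqref{eqn:prod-recurrence} that each exponent $a_n$ is a function only of the coefficients $b_1,\dots,b_n$, and then use the hypothesis $f(q)-g(q)\in q^{k+1}\mathbb{C}[[q]]$ to conclude that these inputs agree for $n\le k$. Concretely, I would solve the recurrence for $a_n$ by isolating the $na_n$ term:
\begin{equation*}
na_n \;=\; nb_n \;-\; \sum_{d\mid n,\, d<n} d\, a_d \;-\; \sum_{j=1}^{n-1}\Bigl(\sum_{d\mid j} d\, a_d\Bigr) b_{n-j}.
\end{equation*}
Since $n\ne 0$ in $\mathbb{C}$, this expresses $a_n$ as an explicit polynomial in $b_1,\dots,b_n$ and $a_1,\dots,a_{n-1}$.

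Next, I would proceed by strong induction on $n$ to prove the claim that $a_n^{(f)}=a_n^{(g)}$ for each $n$ with $1\le n\le k$. The base case $n=1$ follows immediately: the displayed formula gives $a_1=b_1$, and by hypothesis $b_1^{(f)}=b_1^{(g)}$ because $k\ge 1$. For the inductive step, assume $a_d^{(f)}=a_d^{(g)}$ for all $d<n$ with $n\le k$. Every $b_{n-j}$ with $1\le j\le n-1$ appearing in the formula has index $n-j<n\le k$, so $b_{n-j}^{(f)}=b_{n-j}^{(g)}$ by the hypothesis on $f-g$; similarly $b_n^{(f)}=b_n^{(g)}$ since $n\le k$. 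Substituting into the boxed identity gives $a_n^{(f)}=a_n^{(g)}$, completing the induction.

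There is no real obstacle here: the content of the corollary is purely the observation that the Euler recurrence is lower-triangular in an appropriate sense. The only mild point worth writing carefully is the book-keeping that every index $n-j$ and every divisor $d<n$ is strictly less than $n$, so that the inductive hypothesis genuinely applies. No convergence or analytic considerations enter, because everything takes place in $\mathbb{C}[[q]]$ and the recurrence is finite at each order.
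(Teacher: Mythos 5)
Your proof is correct and follows the same route as the paper: the paper's proof is just the one-line observation that equation \eqref{eqn:prod-recurrence} shows $a_n$ depends only on $b_1,\dots,b_n$, and your write-up simply makes the implicit strong induction explicit. No issues.
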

\begin{proof}
Equation \eqref{eqn:prod-recurrence} implies that $a_n$
is dependent only on $b_1,\dots,b_n$.
\end{proof}

\begin{rmk}\label{rmk:approx}
Computers can really only deal with polynomials as opposed to genuinely 
infinite formal power series. Corollary \ref{cor:finiteisok} implies that
giving a better polynomial approximation to a formal power series by
providing more terms with higher powers of $q$ does not affect the 
first few factors in the already computed product. 
This fact will be used in our verifications of the newly found identities
up to high order.
\end{rmk}

\section{Methods}
\label{sec:methods}

We use the following methodology to arrive at
interesting identities:

\begin{enumerate}
 \item 
First, we use the following conditions with varying parameters to generate
our list of sum sides:

\begin{itemize}
\item Smallest part size:
This is the smallest allowable part, along with the maximum 
multiplicity with which it is allowed to appear.
For example, in Gordon's identities, there is a restriction on the number of 
occurrences of 1 as a part.
In \idf, this is implemented as the procedure 
{\tt{SmPartCheck}}.

\item Difference-at-a-distance:
Fix values of $k$ and $d$. If, for all $j$, $\lambda_j - \lambda_{j+k}\geq  d$,
we say that the partition satisfies the difference $d$ at distance $k$ condition.
For example, the two Rogers-Ramanujan identities both feature
difference 2 at distance 1 conditions.
In \idf, this is implemented as the procedure 
{\tt{DiffDistCheck}}.

\item Congruence-at-a-distance:
If for all $j$,
$\lambda_j \le \lambda_{j+A} + B$
only if
$\lambda_{j} + \lambda_{j+1} \cdots + \lambda_{j+A}$ is congruent to $C \pmod D$,
we say that the partition satisfies the $\left(A,B,C,D\right)$-congruence condition.
In \idf, this is implemented as the procedure 
{\tt{CapparelliCheck}}. 
Using $A=1$ is an important specialization.
In this case, the condition simplifies to: two consecutive parts differ by at most
$B$ only if their sum is congruent to $C \pmod D$.
For example, in our notation, the key condition in Capparelli's identities
is the $\left(1,3,0,3\right)$-congruence condition.
The Andrews-Bressoud identities use a congruence condition with $D=2$.
Sometimes, it is important to consider seemingly ``wrong'' congruence conditions. 

Formal $q$-series corresponding to the ``wrong'' congruence conditions,
called ``ghost series'' in \cite{KLRS}, played a crucial role in the motivated proof of the 
Andrews-Bressoud identities in \cite{KLRS}.

\end{itemize}

Typically, we calculate up to $N$ terms, i.e., we examine
all partitions of $n$ from $0$ to $N$, and deduce
initial terms for the corresponding (infinite) generating function.
We arrive at:
$$1 + \sum\limits_{n=0}^{N}b_nq^n.$$
In our search, a usual value for $N$ is taken to be $30$.

\item Now, we use Euler's algorithm (see Proposition \ref{prop:euler}
and Remark \ref{rmk:garvan}) to factor the expression just obtained
as:
$$f(q) = \prod\limits_{m\ge 1}(1-q^m)^{-a_m}.$$
In light of Remark \ref{rmk:approx},
we concentrate on the sequence $\left\lbrace a_m\right\rbrace_{m=1}^{N}$.

\item The sequences $\left\lbrace a_m\right\rbrace_{m=1}^{N}$ that are periodic
with a sufficiently small period correspond to potential candidates for partition identities.

\item Given the potential candidates, we investigate further, using
either or both of the following strategies:

\begin{itemize}
 \item We increase the value of $N$ to incorporate more terms.
 \item We write down the recursions which govern the sum sides (see Section \ref{sec:500} 
for specific examples) and calculate hundreds of terms.
\end{itemize}

\end{enumerate}

\section{Results}
\label{sec:results}
Naturally, our methods rediscover many known identities.
So far, our searches have found six apparently new (conjectured) identities.
Three of them form a single family of mod 9 identities:
\begin{itemize}
\item[$I_1:$] The number of partitions of a non-negative integer into parts congruent to $1,$ $3,$ $6,$ or $8$ mod 9
is the same as the number of partitions with difference at least 3 at distance 2 such that
if two consecutive parts differ by at most 1, then their sum is divisible by 3.
\item[$I_2:$] The number of partitions of a non-negative integer into parts congruent to $2,$ $3,$ $6,$ or $7$ mod 9
is the same as the number of partitions with smallest part at least 2 and difference at least 3 at distance 2
such that if two consecutive parts differ by at most 1, then their sum is divisible by 3.
\item[$I_3:$] The number of partitions of a non-negative integer into parts congruent to $3,$ $4,$ $5,$ or $6$ mod 9
is the same as the number of partitions with smallest part at least 3 and difference at least 3 at distance 2
such that if two consecutive parts differ by at most 1, then their sum is divisible by 3.
\end{itemize}
Note that the congruence conditions are all symmetric. They could be rewritten as $\pm1$ and $\pm3$, $\pm2$ and $\pm3$, and
$\pm3$ and $\pm4$, respectively.
A fourth mod 9 identity appears to be connected, but has asymmetric congruence conditions:
\begin{itemize}
\item[$I_4:$] The number of partitions of a non-negative integer into parts congruent to $2,$ $3,$ $5,$ or $8$ mod 9 is the same as the number
of partitions with smallest part at least 2 and difference at least 3 at distance 2 such that if two consecutive parts differ by at most 1, then their sum is congruent to $2 \pmod 3$.
\end{itemize}

We also found a pair of mod 12 identities, again with asymmetric congruence conditions:
\begin{itemize}
\item[$I_5:$] The number of partitions of a non-negative integer into parts congruent to $1,$ $3,$ $4,$ $6,$ $7,$ $10,$ or $11$ mod 12
is the same as the number of partitions with at most one appearance of the part 1 and difference at least 3 at distance 3
such that if parts at distance two differ by at most 1, then their sum (together with the intermediate part) is congruent to $1 \pmod 3$.
\item[$I_6:$] The number of partitions of a non-negative integer into parts congruent to $2,$ $3,$ $5,$ $6,$ $7,$ $8,$ or $11$ mod 12
is the same as the number of partitions with smallest part at least 2, at most one appearance of the part 2, and
difference at least 3 at distance 3 such that if parts at distance two differ by at most 1,
then their sum (together with the intermediate part) is congruent to $2 \pmod 3$.
\end{itemize}

All six of these conjectures have been verified for at least 500 terms.

\section{Verification}
\label{sec:500}

In order to verify our conjectures up to a higher degree of certainty, 
we make use of the recursions which
govern the respective sum sides, similarly to \cite{A4}. 
Specifically, we use polynomial generating functions which
incorporate the conditions on the sum sides, with added
restrictions on the largest part which can appear in the partition.
In the limit as the largest part goes to infinity, these 
polynomials converge to their respective (formal) sum sides.
Actually, polynomials in this spirit have also been used in the ``motivated proofs''
of various identities. In \cite{AB}, such polynomials, denoted by $A$, 
were used in the motivated proof of the Rogers-Ramanujan identities.
In \cite{LZ}, \cite{CKLMQRS}, and \cite{KLRS}, analogous
polynomials, denoted by $h$, were used in the motivated proofs of the
Gordon, G\"ollnitz-Gordon-Andrews, and Andrews-Bressoud identities, respectively.

Let $f_j(q)$ be the generating function for the partitions counted in the
sum side of the identity $I_j$ for $j=1,2,3$.
Let 
\begin{align*}
P_{j,k}(q) = & \text{ generating function for the partitions counted }\\
& \text{ in the sum side of the identity } I_j, \text{ where }  j=1,2,3, \\
&  \text{ with the added restriction that the 
largest part is at most } k.
\end{align*}

It is clear that $P_{j,k}(q)$ is a polynomial, which agrees with $f_j(q)$ 
up to  $q^k$. Therefore, by Corollary \ref{cor:finiteisok},
the product obtained by applying Euler's algorithm to $P_{j,k}(q)$
agrees with the product corresponding to $f_j(q)$ up to the factor $(1-q^k)^{-a_k}$.
It is easy to see that the polynomials $P_{j,k}(q)$ satisfy the same recursion
irrespective of the value of $j$:
\begin{align}
P_{j,3n} &= P_{j,3n-1} + q^{3n}P_{j,3n-2}+ q^{3n}  q^{3n}  P_{j,3n-3} \label{eq:Crec0} \\
P_{j,3n+1} &= P_{j,3n} + q^{3n+1}P_{j,3n-1}\label{eq:Crec1}\\
P_{j,3n+2} &= P_{j,3n+1} + 
q^{3n+2}  q^{3n+1}  P_{j,3n-1}+ 
q^{3n+2}  q^{3n}   P_{j,3n-2}+ 
q^{3n+2}P_{j,3n-1}. \label{eq:Crec2}
\end{align}
Note that we have presented some exponents in an unsimplified form in our recursions (above and below)
to better illustrate how the latter are produced.
The initial conditions, however, depend on the value of $j$:
\begin{alignat}{3}
P_{1,1} &=  1+q \quad &  \quad  P_{1,2} &= 1+q+q^2+q^3  \quad  & \quad  P_{1,3}  &= 1+q+q^2+2q^3 + q^4 + q^7\\
P_{2,1} &=  1 \quad &  \quad  P_{2,2} &= 1+q^2  \quad  & \quad  P_{2,3}  &= 1 + q^2+q^3  + q^6\\
P_{3,1} &=  1 \quad &  \quad  P_{3,2} &= 1  \quad  & \quad  P_{3,3}  &= 1 + q^3+q^6. 
\end{alignat}

For the remaining mod 9 identity, $I_4$, 
letting
\begin{align*}
Q_{k}(q) = & \text{ generating function for the partitions counted }\\
& \text{ in the sum side of the identity } I_4, \\
&  \text{ with the added restriction that the 
largest part is at most } k,
\end{align*}
the recursions are: 
\begin{align}
Q_{3n} &= Q_{3n-1} + 
q^{3n}  q^{3n-1}  Q_{3n-3} + 
q^{3n}  q^{3n-2}  Q_{3n-4} + 
q^{3n}Q_{3n-3} \\
Q_{3n+1} &= Q_{3n} + 
q^{3n+1}  q^{3n+1}  Q_{3n-2} + 
q^{3n+1} Q_{3n-1} \\
Q_{3n+2} &= Q_{3n+1} + q^{3n+2}Q_{3n},
\end{align}
with the initial conditions:
$$Q_0 =1, \,\, Q_1 = 1, \,\, Q_2=1+q^2, \,\, Q_3=1+q^2+q^3+q^5.$$

Now, let us  turn to the mod 12 identities.
For the first mod 12 identity, $I_5$, let
\begin{align}
R_{n,a}(q) = &\text{ generating function of partitions with largest part at most } n
\nonumber\\
&\text{ and at most } a \text{ parts equalling } n \nonumber\\
&\text{ in addition to the given constraints in the sum-sides.}
\end{align}
It is easy to see that we need only consider $a\in\{1,2\}$.
The following recursions and initial conditions are satisfied by these polynomials:

\begin{align}
R_{n,1} &= 
q^n  R_{n-1,2} - q^n  q^{n-1}  q^{n-2}  q^{n-2}  R_{n-4,1} + R_{n-1,2}\\
R_{n,2} &= 
q^n   q^n  R_{n-2,1} + R_{n,1}\\
R_{1,1}&=  1+q\\
R_{1,2}&=  1+q\\
R_{2,1}&=  1+q+q^2+q^3\\
R_{2, 2}&= 1+q+q^2+q^3+q^4\\
R_{3, 1}&= 1+q+q^2+2q^3+2q^4+q^5+q^6+q^7\\
R_{3, 2}&= 1+q+q^2+2q^3+2q^4+q^5+2q^6+2q^7\\
R_{4,1}&=  1+q+q^2+2q^3+3q^4+2q^5+3q^6+4q^7+2q^8+q^9+2q^{10}+q^{11}\\
R_{4, 2}&= 1+q+q^2+2q^3+3q^4+2q^5+3q^6+4q^7+3q^8+2q^9+3q^{10}+2q^{11}.
\end{align}

With a similar definition of
the polynomials $S_{n,a}(q)$, for the second mod 12 identity, 
we have the following recursions and initial
conditions:
\begin{align}
S_{n,1}&= q^n S_{n-1,1} + S_{n-1,2}\\
S_{n,2} &= q^n  q^n  q^{n-1}  S_{n-3,2} + q^n   q^n   S_{n-2,1} + S_{n,1}\\
S_{1,1}&=1\\
S_{1,2}&=1\\
S_{2, 1}&= 1+q^2\\
S_{2, 2}&= 1+q^2\\
S_{3, 1}&= 1+q^2+q^3+q^5\\
S_{3, 2}&= 1+q^2+q^3+q^5+q^6+q^8.
\end{align}

For faster computation, we use the following trick:
Say we want to check that the partition identities hold
till $n=N$. Then, it suffices to 
compute the polynomials modulo $q^{N+1}$. 
This greatly hastens the computations.

\section{Further work}
\label{sec:future}
First, it is always possible to expand the parameter-space search
by incorporating more innovative conditions on the sum sides 
(for instance, the G\"ollnitz-Gordon-Andrews identities \cite{A1},
the Andrews-Santos identities \cite{AS}, etc.)
or by increasing the values of the various parameters. 
We hope that more identities could be found in this way.
It would be interesting to examine the sum sides of more 
recent partition identities to see if they can inspire additional 
checks to build in to the package. 
For one intriguing example, see the new difference conditions 
on the sum sides of the conjectures in Nandi~\cite{N}.

Recent research (see, for instance, \cite{CM}) has focused on proving
overpartition analogues of many classical partition identities.
It would be interesting to extend our methods to consider overpartitions
and more generally, multi-color partition identities.
However, one main challenge is that there are many more overpartitions
than partitions of a given integer. Using a na\"ive approach,
it may be difficult to calculate out enough terms to 
form reasonable conjectures.

Finally, proving the six conjectures mentioned in 
the previous section is a task in progress.

\section{Acknowledgments}
The authors gratefully acknowledge suggestions from Doron Zeilberger,
who suggested an improvement to our program which allowed for verification of many more terms
of our conjectures. 
We used Frank Garvan's implementation of Euler's algorithm 
in his Maple package {\tt {qseries}}~\cite{G}.
We are indebted to James Lepowsky for his constant encouragement,
suggestions for improving the exposition,
and for introducing the first author to the delightful world of partitions
and vertex operator algebras. We thank Drew Sills for proposing 
ways for proving these identities, for suggesting relevant references,
and for carefully reading the manuscript.
We also thank Robert McRae for illuminating discussions.

\end{document}